\documentclass{article}

\usepackage{amsmath}
\usepackage{amsthm}
\newtheorem{lemma}{Lemma}
\newtheorem{theorem}{Theorem}
\newtheorem{definition}[theorem]{Definition}

\usepackage{graphicx}
\title{Determining monotonic step-equal sequences of any limited length in the Collatz problem}

\author{Longjiang Li, \\School of Information and Communication Engineering,\\ University of Electronic Science and Technology of China,\\ Chengdu, Sichuan, 611731.\\ Email: longjiangli@uestc.edu.cn}

\begin{document}

\maketitle

\begin{abstract}
This paper proposes a formula expression for the well-known Collatz conjecture (or 3x+1 problem),  which can pinpoint all the growth points in the orbits of the Collatz map for any natural numbers. The Collatz map  $Col: \mathcal{N}+1 \rightarrow \mathcal{N}+1$ on the positive integers is defined as $x_{n+1}=Col(x_n)=(3 x_n +1)/2^{m_n}$ where $x_{n+1}$ is always odd and $m_n$ is the step size required to eliminate any possible even values. The Collatz orbit for any positive integer, $x_1$, is expressed by a sequence, $<x_1$; $x_2\doteq Col(x_1)$;  $\cdots$ $x_{n+1}\doteq Col(x_n)$; $\cdots>$ and $x_n$ is defined as a growth point if $Col(x_n)>x_n$ holds, and we show that every growth point is in a format of ``$4y+3$'' where $y$ is any natural number.  Moreover, we derive that, for any given  positive integer $n$, there always exists a natural number, $x_1$, that starts a monotonic increasing or decreasing Collatz sequence of length $n$ with the same step size.  For any given positive integer $n$, a class of orbits that share the same orbit rhythm of length $n$  can also be determined.
\end{abstract}

\section{Introduction}

The Collatz conjecture (or 3x+1 problem) has been explored for more than 80 years\cite{Aliyev2013}\cite{Ma2019}\cite{Aljassas2019} and  verified to be true for all natural numbers up to $2^{100000}-1$ \cite{Ren2018}, but it still defied any formal proof\cite{Sayama2011}\cite{Tao2019}. Currently, almost all rigorous results are asymptotic or probabilistic. For example, Lagarias and Weiss\cite{Lagarias1992} proposed two stochastic models to mimic the dynamics of the Collatz series. Krasikov and Lagarias \cite{Krasikov2003} derived  bounds using difference inequalities for all sufficiently large $x$. Terras \cite{Terras1976} showed that the minimal element $Col_{min}(x)$ of the Collatz orbit is less than $x$ for almost all $x$. Korec\cite{Korec1994} improved the results to  $Col_{min}(x)< x^\theta$ for  almost all $x$ and any $\theta > log 3/log 4 \approx 0.7924$. Recently, Tao \cite{Tao2019} showed $Col_{min}(x) < \log \log \log \log x$ for almost all $x$. 

This paper proposes a formula expression, which can pinpoint all the growth points in the orbits of the Collatz map for any natural numbers, so that the problem may be more tractable as an equation-solving process. We show that, under some constraints, these equations are easy to solve, which leads to the key results of this paper, as follows.
\begin{itemize}
	\item A monotonic increasing Collatz sequence of any limited length with the same step size can be determined.
	\item A monotonic decreasing Collatz sequence of any limited length with the same step size can be determined.
	\item A class of Collatz orbits that share the orbit rhythm of any limited length can be determined.
\end{itemize}

\section{Notations}
Let denote the natural numbers by $\mathcal{N}:= \{0, 1, 2, \cdots\}$, so that $\mathcal{N}+1 = \{1, 2, \cdots\} $ are the positive integers.  The Collatz problem, also known as the $3x + 1$ problem, generates a series $<x_1, f(x_1), f(f(x_1)), \cdots>$ for an arbitrary positive integer $x_1 \in \mathcal{N}+1$ by defining the Collatz operations as $f(x)= 3 x+1$ when $x$ is odd and $f(x)=x/2$ when $x$ is even. The Collatz conjecture asserts that this series always falls into a 4 $\rightarrow$ 2 $\rightarrow$ 1 cycle regardless of $x_1$.  For the purpose of convenience, we define the Collatz map, $Col: \mathcal{N}+1 \rightarrow \mathcal{N}+1$ as follows.

\begin{definition}[The Collatz map]
\begin{equation} \label{map}
x_{n+1}=Col(x_n)=\frac{3 x_n +1}{2^{m_n}}
\end{equation}
where $n \in \mathcal{N}+1$, $x_{n+1}$ is always odd and $m_n \in \mathcal{N}+1$ is the step size required to eliminate any possible even values for $x_n$. 
\end{definition}

Note that we specify that $Col$ combines two Collatz operations. One is to attempt  $f(x)= 3 x+1$ firstly if $x$ is odd, and then to attempt $f(x)=x/2$ if $x$ is even. Thus, $m_n\ge 1$ always holds for $x_1>1$.

Then, for any positive integer, $x_1$, its Collatz orbit is defined as a sequence, $<x_1$; $x_2\doteq Col(x_1)$;  $\cdots$ $x_{n+1}\doteq Col(x_n)$; $\cdots>$.

\begin{definition}[Growth point]\label{def:growth}
	$x_n$ is defined as a growth point, if $Col(x_n)>x_n$ holds.
\end{definition}

\section{Theorems}
\begin{lemma}[Sufficient and necessary condition] \label{lemma:growth}
   Every growth point is in a format of ``$4y+3$'' where $y$ is any natural number.
\end{lemma}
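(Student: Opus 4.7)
The plan is to reduce the growth condition $Col(x_n) > x_n$ to a congruence on $x_n$ modulo $4$, working with the identity $Col(x_n) = (3x_n+1)/2^{m_n}$ and the fact that the output of the Collatz map is odd (so $m_n$ is chosen maximally to clear all factors of $2$).

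First, I would pin down the exponent $m_n$ at a growth point. Multiplying through by $2^{m_n}$, the inequality $(3x_n+1)/2^{m_n} > x_n$ is equivalent to
\begin{equation*}
2^{m_n} < 3 + \frac{1}{x_n}.
\end{equation*}
For any positive integer $x_n \ge 1$ the right-hand side is at most $4$, and since $m_n$ is a positive integer with $m_n \ge 1$, the only possibility is $m_n = 1$. Thus every growth point is exactly a one-step point.

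Second, I would translate $m_n = 1$ into a congruence. By the definition of $Col$, $m_n = 1$ means that $3x_n + 1$ is divisible by $2$ but not by $4$, i.e.\ $3x_n + 1 \equiv 2 \pmod 4$, equivalently $3x_n \equiv 1 \pmod 4$. Multiplying by $3$ (which is self-inverse mod $4$) gives $x_n \equiv 3 \pmod 4$, so $x_n = 4y + 3$ for some $y \in \mathcal{N}$. A brief sanity check on parity: a growth point must in particular be odd (otherwise the formula would force $m_n \ge 1$ with the numerator odd, which is impossible), and among odd residues $1, 3 \pmod 4$ only $x_n \equiv 3$ yields exactly one factor of $2$ in $3x_n + 1$; the residue $x_n \equiv 1 \pmod 4$ gives $3x_n + 1 \equiv 0 \pmod 4$, hence $m_n \ge 2$ and $Col(x_n) < x_n$.

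The argument is essentially a two-line calculation, so there is no substantial obstacle; the only thing that needs care is justifying that $m_n = 1$ forces the ``not divisible by $4$'' condition, which is exactly the maximality built into the definition of the step size $m_n$. If one wanted the converse (every $4y+3$ is a growth point), the same computation is reversible: $3(4y+3)+1 = 12y+10 = 2(6y+5)$ with $6y+5$ odd, so $m_n = 1$ and $Col(4y+3) = 6y + 5 > 4y+3$ for all $y \in \mathcal{N}$, matching the ``sufficient and necessary'' phrasing in the lemma's title.
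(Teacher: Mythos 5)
Your proof is correct, but it takes a genuinely different route from the paper. The paper simply splits $x_n$ into the four residue classes $4y$, $4y+1$, $4y+2$, $4y+3$ and computes $Col$ explicitly in each case, observing that only the last one grows. You instead first characterize growth points by their step size: the inequality $(3x_n+1)/2^{m_n} > x_n$ is equivalent to $2^{m_n} < 3 + 1/x_n \le 4$, forcing $m_n = 1$, and then you convert ``$3x_n+1$ is divisible by $2$ but not $4$'' into the congruence $x_n \equiv 3 \pmod 4$. Both arguments are elementary and complete, but yours isolates the structural reason behind the lemma --- a point grows if and only if its step size is exactly $1$ --- which is a slightly stronger and more reusable fact than the paper's case check (indeed the paper implicitly relies on this observation later when it sets $m=1$ to build increasing sequences in Theorem~\ref{thm:incr}). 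The one place where your write-up is looser than the paper's is the even case: the formula $Col(x_n)=(3x_n+1)/2^{m_n}$ as you use it presupposes $x_n$ odd, and your parenthetical justification that even inputs cannot be growth points is somewhat garbled; the paper handles $4y$ and $4y+2$ explicitly by direct division. This is a presentational wrinkle rather than a gap, since for even $x$ one trivially has $Col(x) \le x/2 < x$, and your converse computation ($3(4y+3)+1 = 2(6y+5)$ with $6y+5$ odd) matches the paper's exactly.
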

\begin{proof}
Any positive integer $x_n$ can be expressed as in one of four formats, ``$4y$'', ``$4y+1$'', ``$4y+2$'' and ``$4y+3$''.
\begin{itemize}
	\item ``$4y$'':  $4y$ is even, so $m_n$ for $Col(4y)$  is at least $2$. Thus, $Col(4y)\le y < 4y$.
	\item ``$4y+1$'':  $4y+1$ is odd, so we get a series: $4y+1 \rightarrow 3(4y+1)+1= 12y+4 \rightarrow 3y+1$. Thus, $Col(4y+1)= 3y+1 <4y+1 $.
	\item ``$4y+2$'':  $4y+2$ is even, so we get a series: $4y+2 \rightarrow 2y+1$. Thus, $Col(4y+2)= 2y+1 <4y+2 $.
	\item ``$4y+3$'':  $4y+3$ is odd, so we get a series: $4y+3 \rightarrow 3(4y+3)+1= 12y+10\rightarrow 6y+5 $. Thus, $Col(4y+3)= 6y+5 >4y+3 $.
\end{itemize}
	
\end{proof}	

According to (\ref{map}), for any integer $n\ge 2$, we have
\begin{equation}
   x_{n}=Col(x_{n-1})=Col(Col(...(Col(x_1))...))\doteq Col^{n-1}(x_1)
\end{equation}
\begin{equation} \label{eqn:xn}
=\frac{3^{n-1} x_1 + \sum_{i=0}^{n-2} (3^{n-2-i}\times \prod_{j=0}^{i} 2^{m_j}) }{\prod_{j=0}^{n-1} 2^{m_j}}
\end{equation}
where $Col^{0}(x_1)\doteq x_1$, $2^{m_0}\doteq 1$ and $m_j$ is the step size for $x_j$. 

According to Lemma \ref{lemma:growth}, $x_{n}$  is in a format of ``$4y+3$'', if and only if $x_{n}$ is a growth point. So, we define an equation as follows.

\begin{definition}[Formula expression]
\begin{equation} \label{eqn:formula}
4y_{n}+3= x_n
\end{equation}
\end{definition}

By expanding $x_n$ according to  (\ref{eqn:xn}), we rewrite (\ref{eqn:formula}) as follows.
\begin{multline} \label{eqn:formula2}
y_{n}+1 =\frac{3^{n-1} x_1 + \sum_{i=0}^{n-2} (3^{n-2-i}\times \prod_{j=0}^{i} 2^{m_j})+ \prod_{j=0}^{n-1} 2^{m_j}}{4\times \prod_{j=0}^{n-1} 2^{m_j}}\\
\doteq X(x_1, n, m_1, m_2, \cdots, m_{n-1}) \\
\end{multline}

By assuming $m_1=m_2=\cdots= m_{n-1}=m$, we have
		\begin{equation}
	y_{n}+1=\frac{3^{n-1} x_1 + \sum_{i=0}^{n-2} (3^{n-2-i}\times \prod_{j=0}^{i} 2^{m})+ \prod_{j=0}^{n-1} 2^{m}}{4\times \prod_{j=0}^{n-1} 2^{m}}
	\end{equation}
	\begin{equation}
	=\frac{3^{n-1} x_1  +\frac{(2^m)^{n-1}-3^{n-1}}{2^m-3}  +   (2^m)^{n-1}  }{4\times (2^m)^{n-1} } 
	\end{equation}
\begin{equation}\label{eqn:mono}
= \frac{3^{n-1}}{4\times (2^m)^{n-1} } (x_1- \frac{1}{2^m-3})+ \frac{1}{4(2^m-3)}+\frac{1}{4}
\end{equation}

\begin{theorem}[Monotonic increasing sequences with the same step size] \label{thm:incr}
	Given any positive integer $n$, there always exists a natural number, $x_1$, so that $Col^{i+1}(x_1)>Col^{i}(x_1)$ holds with the same step size for every $i \in [0, n-1]$. 
\end{theorem}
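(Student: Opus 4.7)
The plan is to combine Lemma \ref{lemma:growth} with the closed form (\ref{eqn:mono}). First, I note that whenever $x_i = 4y_i + 3$, the identity $3x_i + 1 = 2(6y_i + 5)$ with $6y_i + 5$ odd forces the step size to be $m_i = 1$. Thus any increasing run of growth points automatically has a common step size, and that step size is necessarily $1$; conversely, the theorem reduces to exhibiting $x_1$ for which $x_1, x_2, \ldots, x_n$ all lie in the residue class $3 \pmod 4$.

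Setting $m = 1$ in equation (\ref{eqn:mono}), the constants $\tfrac{1}{4(2-3)}$ and $\tfrac{1}{4}$ cancel, and the expression collapses to
\[
y_n + 1 = \frac{3^{n-1}(x_1 + 1)}{2^{n+1}}.
\]
Since $\gcd(3^{n-1}, 2^{n+1}) = 1$, requiring $y_n \in \mathcal{N}$ is equivalent to the divisibility $2^{n+1} \mid (x_1 + 1)$. I therefore propose the family $x_1 = 2^{n+1} k - 1$ for any positive integer $k$, with minimal representative $x_1 = 2^{n+1} - 1$.

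To conclude I must verify that this single choice forces every intermediate $x_i$ (not merely $x_n$) to be a growth point. A short induction yields the explicit form $x_i = 3^{i-1} \cdot 2^{n+2-i} k - 1$ for $i = 1, \ldots, n$: if the form holds at index $i \leq n-1$, then $3 x_i + 1 = 2(3^i \cdot 2^{n+1-i} k - 1)$, whose second factor is odd, so the step size is $1$ and the form propagates. For $i \leq n$, the inequality $n + 2 - i \geq 2$ ensures $4 \mid 3^{i-1} \cdot 2^{n+2-i} k$, hence $x_i \equiv -1 \equiv 3 \pmod 4$, and Lemma \ref{lemma:growth} identifies $x_i$ as a growth point. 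The main obstacle I anticipate is this bookkeeping: formula (\ref{eqn:mono}) only encodes the growth condition at step $n$, whereas the theorem demands it at every step from $1$ to $n$; the resolution is that the divisibility required at step $n$ is the strongest and automatically subsumes the weaker conditions at earlier steps.
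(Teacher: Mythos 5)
Your proposal is correct and follows essentially the same route as the paper: set $m=1$ in (\ref{eqn:mono}) to obtain $y_n+1=3^{n-1}(x_1+1)/2^{n+1}$ and take the witness family $x_1=2^{n+1}K-1$. The one substantive difference is that you go on to verify, by the induction $x_i=3^{i-1}\cdot 2^{\,n+2-i}K-1$, that every intermediate term really is a growth point with step size $1$ --- a check the paper's own proof omits (it only confirms that $y_n$ is an integer, i.e.\ the condition at step $n$), so your version actually closes a small gap rather than diverging from the argument.
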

\begin{proof}
	By assuming the step size $m_1=m_2=\cdots= m_{n-1}=m=1$, from (\ref{eqn:mono}), we get
	\begin{equation} \label{eqn:incr}
	y_n+1= \frac{3^{n-1}}{2^{n+1} } (x_1+1)
	\end{equation}
Let $x_1= K \times 2^{n+1}-1$, then $y_n= K \times 3^{n-1} -1$ is an integer for any positive integer $K$. Thus, every integer in the format of $x_1=K \times 2^{n+1}-1$ satisfies the requirements  of the theorem.
\end{proof}

Note that $y_n$ corresponds to a growth point. According to (\ref{eqn:incr}), for any $x_1= K \times 2^{n+1}-1$ where $n>1$,  we get a subsequence of the Collatz orbit from $x_1$ to $x_{n+1}=Col(4y_n+3)=6\times K \times 3^{n-1}-1$ with $m_1=m_2=\cdots= m_{n-1}=m=1$. Fig.~\ref{fig:simincr} shows the digital results calculated by computer  about monotonic increasing sequences with $n=7$ and $K=\{1,2,3\}$, which also validates the theorem.

\begin{figure}
	\centering
	\includegraphics[scale=0.6]{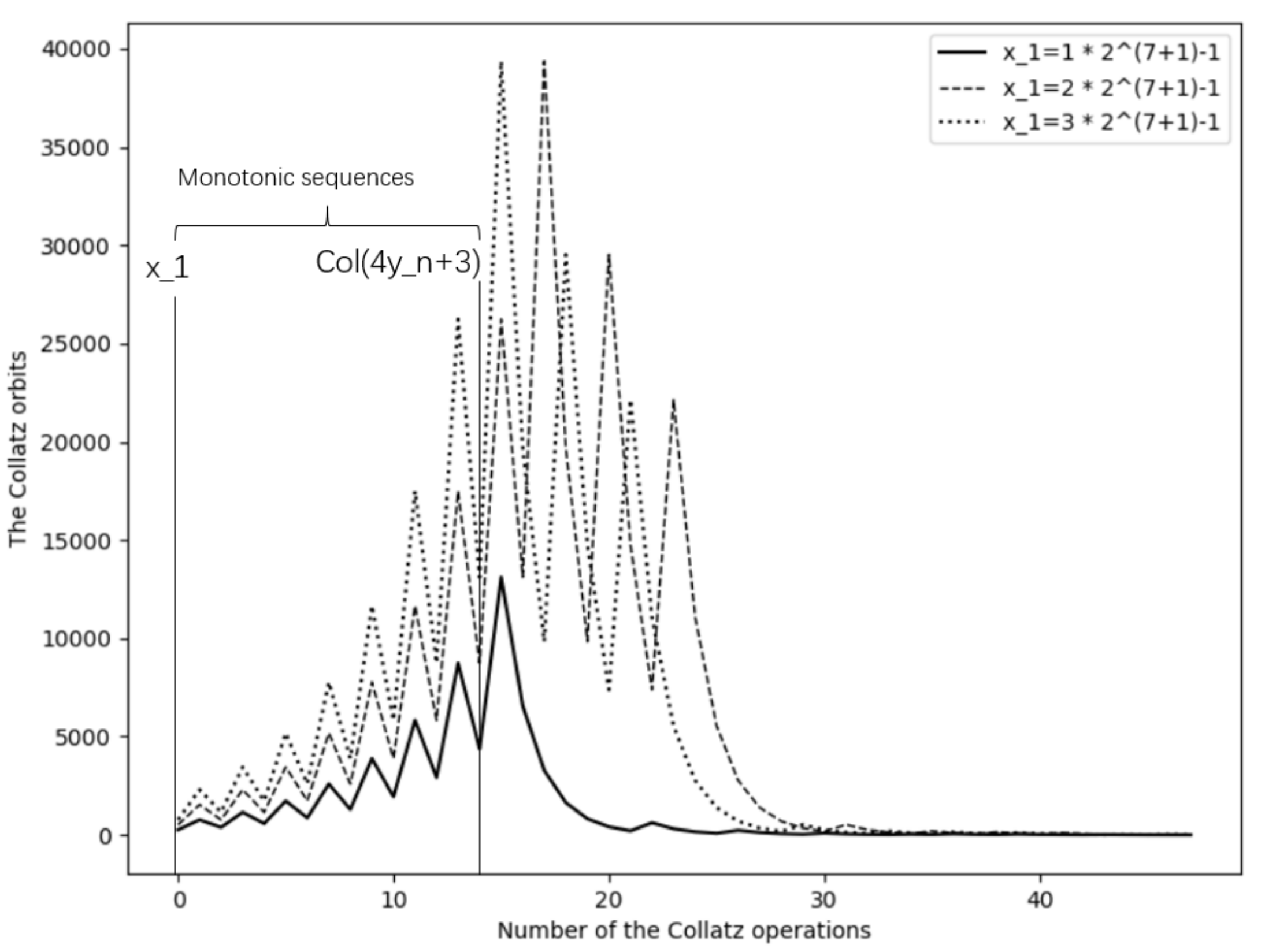}
	\caption{Monotonic increasing sequences with $n=7$ and $K=\{1,2,3\}$ }
	\label{fig:simincr}
\end{figure}

\begin{theorem}[Monotonic decreasing sequences with the same step size]\label{thm:decr}
		Given any positive integer $n$, there always exists a natural number, $x_1$, so that $Col^{i+1}(x_1)<Col^{i}(x_1)$ holds with the same step size for every $i \in [0, n-1]$.
	
\end{theorem}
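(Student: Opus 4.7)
The plan is to mirror the proof of Theorem~\ref{thm:incr}, substituting a uniform step size $m=2$ for $m=1$. First, I would observe that whenever every transition in a stretch of the orbit uses the same step size $m$, the inequality $Col(x_i)=(3x_i+1)/2^m<x_i$ reduces to $x_i>1/(2^m-3)$, which holds automatically for every positive integer $x_i$ as soon as $m\ge 2$. Thus it is enough to force step size exactly $2$ at each of the first $n$ transitions; the strict decrease then comes for free.

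Next, I would translate ``step size exactly $2$'' into a congruence: since each $x_i$ is odd, $m_i=2$ is equivalent to $3x_i+1\equiv 4\pmod 8$, i.e.\ $x_i\equiv 1\pmod 8$. The task therefore reduces to producing an $x_1$ such that the first $n$ iterates all lie in the residue class $1\pmod 8$. Setting $m=2$ in (\ref{eqn:mono}) and demanding $x_n\equiv 1\pmod 8$ yields the divisibility requirement $2^{2n+1}\mid x_1-1$, so I propose the ansatz $x_1=K\cdot 2^{2n+1}+1$ for an arbitrary positive integer $K$. A direct induction on $i$ using $x_{i+1}=(3x_i+1)/4$ gives the closed form
\begin{equation*}
x_i \;=\; K\cdot 3^{i-1}\cdot 2^{2n-2i+3}+1,\qquad i=1,2,\ldots,n+1,
\end{equation*}
and since $2n-2i+3\ge 3$ for every $i\le n$, each $x_i$ with $i\le n$ is $\equiv 1\pmod 8$. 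Hence the step size is exactly $2$ at all $n$ transitions and the orbit strictly decreases $x_1>x_2>\cdots>x_{n+1}$, as required.

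The main obstacle is calibrating the power of $2$ in $x_1-1$ correctly. Merely making $x_n$ an integer needs only $4^{n-1}\mid x_1-1$, but propagating the residue $1\pmod 8$ through all $n$ iterations demands the additional factor of $8$ absorbed into the exponent $2n+1$. This is the decreasing analogue of the sign choice $x_1=K\cdot 2^{n+1}-1$ in Theorem~\ref{thm:incr}: the same algebraic identity drives both results, but the residue condition that distinguishes ``step size $1$'' from ``step size $2$'' forces a different power of $2$ and a different sign in the parameterization of $x_1$.
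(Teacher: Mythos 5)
Your proof is correct, and it is substantially more complete than the paper's own argument, which consists of a single sentence deferring to the proof of Theorem~\ref{thm:incr} with ``$m\ge 2$'' and never exhibits an explicit $x_1$. I verified your closed form $x_i=K\cdot 3^{i-1}\cdot 2^{2n-2i+3}+1$ by induction: $3x_i+1=4\bigl(K\cdot 3^{i}\cdot 2^{2n-2i+1}+1\bigr)$, and for $i\le n$ the bracketed factor is odd, so the $2$-adic valuation is exactly $2$ and each iterate stays in the class $1\pmod 8$; this is precisely what is needed. Your route is also genuinely different from a literal transplant of the paper's method, and necessarily so: the paper's formula expression~(\ref{eqn:formula}) with integer $y_n$ characterizes \emph{growth} points ($x_n\equiv 3\pmod 4$), which is the opposite of what a decreasing sequence requires, so setting $m=2$ in~(\ref{eqn:mono}) and demanding $y_n\in\mathcal{N}$ would actually sabotage the claim at step $n$. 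Your replacement condition $x_i\equiv 1\pmod 8$ (equivalently $v_2(3x_i+1)=2$) is the correct analogue, and your observation that the exponent must be $2n+1$ rather than the naive $2(n-1)$ is exactly the point the paper glosses over. One trivial caveat: your remark that $(3x+1)/2^m<x$ ``holds automatically for every positive integer'' when $m=2$ fails at $x=1$ (where $Col(1)=1$), but since every $x_i$ in your construction exceeds $1$ this does not affect the proof; you may wish to phrase it as holding for all odd $x>1$.
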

\begin{proof}
	We can obtain a similar proof just like in Theorem \ref {thm:incr}, by assuming $m_1=m_2=\cdots= m_{n-1}=m_n=m\ge2$, though each sequence is decreasing from $x_1$ to $x_{n+1}$ in this case.
\end{proof}

If two Collatz orbits have the same sequence of step sizes, i.e., $<m_1, m_2, ..., m_n>$, we say that these two orbits share the orbit rhythm of length $n$. 

\begin{theorem}[A class of sequences with the same orbit rhythm]\label{thm:rhythm}
	Given any Collatz orbit with  $<m_1, m_2, ..., m_n \equiv 1>$, there exists a class of orbits that share the same orbit rhythm of length $n$.
\end{theorem}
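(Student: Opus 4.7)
The plan is to realise the class as an explicit arithmetic progression of starting values. Given the reference orbit $x_1, x_2, \ldots$ with rhythm $\langle m_1, m_2, \ldots, m_{n-1}, m_n = 1\rangle$, I would set $M := \sum_{j=1}^{n-1} m_j$ and consider the family $x_1^{(K)} := x_1 + K\cdot 2^{M+2}$ for $K \in \mathcal{N}$. The claim is then that every member of this family drives a Collatz orbit whose first $n$ step sizes are exactly $m_1, m_2, \ldots, m_n$.

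First, one observes from formula (\ref{eqn:formula2}) that $y_n + 1$ is an affine function of $x_1$ whose slope equals $3^{n-1}/\bigl(4\prod_{j=0}^{n-1} 2^{m_j}\bigr) = 3^{n-1}/2^{M+2}$ (using $2^{m_0}=1$). Replacing $x_1$ by $x_1 + K\cdot 2^{M+2}$ therefore shifts $y_n$ by the integer $K\cdot 3^{n-1}$, so $x_n^{(K)} = 4 y_n^{(K)} + 3$ retains growth-point form, which is consistent with the target $m_n = 1$.

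The heart of the argument is a one-step induction on $j$. Setting $M_j := \sum_{i=1}^{j} m_i$ (so $M_0 = 0$ and $M_{n-1} = M$), the inductive claim is $x_j^{(K)} \equiv x_j \pmod{2^{M+2-M_{j-1}}}$. Because the reserve exponent $M+2-M_{j-1}$ strictly exceeds $m_j$ for every $j \in \{1, \ldots, n\}$ (the only numerical check is $M_j \le M+1$, which is immediate for $j \le n-1$ and reduces to the equality $M+1 \le M+1$ at $j=n$), the $2$-adic valuation of $3 x_j^{(K)} + 1$ equals that of $3 x_j + 1$, hence equals $m_j$. Dividing by $2^{m_j}$ then yields $x_{j+1}^{(K)} \equiv x_{j+1} \pmod{2^{M+2-M_j}}$, closing the induction; the base case $j=1$ is the definition of $x_1^{(K)}$.

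The main obstacle I anticipate is purely the $2$-adic bookkeeping: the padding of two extra powers of $2$ in $2^{M+2}$ is exactly the right amount. At the final step $j=n$, the residual modulus $2^{M+2-M}=4$ leaves just enough room to force $x_n^{(K)} \equiv x_n \equiv 3 \pmod 4$ and so preserve $m_n = 1$. Anything less than $2^{M+2}$ would fail precisely at step $n$, while anything larger would only select a thinner subfamily; hence the progression $\{x_1 + K\cdot 2^{M+2} : K \in \mathcal{N}\}$ supplies the infinite class of orbits demanded by Theorem~\ref{thm:rhythm}.
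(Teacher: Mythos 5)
Your construction coincides exactly with the paper's: the paper sets $x^{(2)}_1-x^{(1)}_1=R\cdot D(n)$ with $D(n)=4\prod_{j=1}^{n-1}2^{m_j}=2^{M+2}$, which is precisely your progression $x_1+K\cdot 2^{M+2}$. The difference lies entirely in the justification, and there your proposal is genuinely stronger. The paper's proof only observes that with this choice the quantity $y^{(2)}_n-y^{(1)}_n=3^{n-1}\bigl(x^{(2)}_1-x^{(1)}_1\bigr)/\bigl(4\prod_{j=0}^{n-1}2^{m_j}\bigr)$ is an integer; this checks that the formula expression (\ref{eqn:formula2}) \emph{evaluated with the prescribed exponents} $m_1,\dots,m_{n-1}$ yields an integer $y^{(2)}_n$, but it never verifies that the actual Collatz orbit of $x^{(2)}_1$ uses those exponents at each intermediate step --- which is the content of ``sharing the orbit rhythm.'' Your $2$-adic induction supplies exactly the missing step: from $x_j^{(K)}\equiv x_j \pmod{2^{M+2-M_{j-1}}}$ and the fact that the modulus exponent $M+2-M_{j-1}$ strictly exceeds $m_j$ (using $M_j\le M+1$, with equality only at $j=n$ where $m_n=1$), the $2$-adic valuation of $3x_j^{(K)}+1$ is forced to equal $m_j$, so the step size is preserved, and dividing by $2^{m_j}$ propagates the congruence. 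Your accounting of why the two extra factors of $2$ are needed (to pin down $x_n^{(K)}\equiv 3\pmod 4$ and hence $m_n=1$) also explains why the hypothesis $m_n\equiv 1$ appears in the theorem statement, something the paper leaves implicit. In short: same class of orbits, but your argument actually proves the claim, whereas the paper's establishes only a necessary integrality condition.
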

\begin{proof}
 From 	(\ref{eqn:formula2}), we have
 \begin{equation}
   y^{(2)}_n-y^{(1)}_n=\frac{3^{n-1}(x^{(2)}_1 -x^{(1)}_1)}{4\times \prod_{j=0}^{n-1} 2^{m_j}}
 \end{equation}
 
We define  $ x^{(2)}_1 -x^{(1)}_1$ as follows.
\begin{equation} \label{eqnrhythm}
 x^{(2)}_1 -x^{(1)}_1= R \times 4 \times  \prod_{j=1}^{n-1} 2^{m_j}\doteq R\times D(n)
\end{equation} 
Such a definition enables that $y^{(2)}_n-y^{(1)}_n$ be an integer for any integer $R$. Thus, given any orbit $x^{(1)}_1$, the set of orbits starting with $\{x^{(1)}_1, x^{(1)}_1+ D(n), x^{(1)}_1+ 2D(n),  \cdots\}$ satisfies the requirements of the theorem.	
\end{proof}

Note that, in the above theorem, $R=0$ is corresponding to the trivial case, in which $x^{(2)}_1$ is equal to $x^{(1)}_1$ exactly.   Eqn.~(\ref {eqnrhythm}) can be utilized to look for orbits that share the same orbit rhythm.  For example, given the orbit starting with $x_1=9$ having an orbit rhythm $<m_1=2, m_2=1, m_3=1>$, we can get a class of such orbits starting with $\{9, 41, 73, 105, \cdots \}$ that share the same orbit rhythm,  as $D(3)=4 \times 2^{2+1}=32$.

\section{Discussions}

The Collatz conjecture asserts that every Collatz series always falls into a 4 $\rightarrow$ 2 $\rightarrow$ 1 cycle regardless of $x_1$. If not, there are two counterexamples\cite{Goodwin2015}. One is that there possibly exist non-trivial cycles out of 4 $\rightarrow$ 2 $\rightarrow$ 1 and the other is that there possibly exists $x_1$ for which $\underset{n\rightarrow \infty}{\lim} Col^n(x_1)=\infty $ .

\subsection{Non-trivial cycle}

According to Lemma \ref{lemma:growth}, every growth point is in a format ``4y+3'', so all growth points \emph{must} fall into one of the integer solutions to Eqn.~(\ref{eqn:formula}). The hypothetical existence of a non-trivial cycle indeed implies that there \emph{should} exist a positive integer $k$ for which $ y_{n+k}=y_{n}$ holds, i.e.,
\begin{equation}
	X(x_1, n+k, m_1, m_2, \cdots, m_{n+k-1})=X(x_1, n, m_1, m_2, \cdots, m_{n-1})
\end{equation}
Otherwise, non-trivial cycle \emph{should not} exist, if only there is no integer solution to the above equation.

\subsection{Divergent sequence }
Likewise,  the hypothetical existence of a divergent sequence for $x_1$ implies that there \emph{should} exist infinitely many growth points as $n$ is going to infinity.  Since every growth point is corresponding to a value of $\{y_n\}$, the hypothetical existence of a divergent sequence mandates infinitely many unique values of $\{y_n\}$. That is equivalent to saying, that, for any fixed value $C(x_1)$ that depends on $x_1$, there \emph{should} exist an integer solution to Eqn.\ref{eqn:formula2} for $n> C(x_1)$.  Otherwise,  divergent sequence \emph{should not} exist under the assumption of without non-trivial cycles, if only there is no integer solution to Eqn.\ref{eqn:formula2} for $n> C(x_1)$.

\section{Conclusion}

This paper presents a formula expression, which converts the Collatz conjecture to an equation-solving process. We show that, for any given integer number $n$, there always exists a natural number $x_1$ that starts a monotonic increasing or decreasing sequence of length $n$ with the same step size, if only the corresponding equations have integer solutions. We also derive an equation that can be utilized to look for orbits that share the same orbit rhythm. 

Besides, solving these equations may provide hints for the possible existence of non-trivial cycles and divergent sequences. These issues need further efforts invested.

\section*{Acknowledgment}
This work was supported by the National Natural Science Foundation of China (61273235).

\bibliographystyle{plain}
\bibliography{coll}

\begin{thebibliography}{10}

\bibitem{Aliyev2013}
Y.~N. {Aliyev} and V.~A. {Suleymanov}.
\newblock Construction of periods for 3x+1 problem: Use of division algorithm
  by 2 in 3\mbox{-}{b}ase number system for construction of 3-adic numbers as
  periods of collatz sequence.
\newblock In {\em Proc. 7th Int. Conf. Application of Information and
  Communication Technologies}, pages 1--3, October 2013.

\bibitem{Aljassas2019}
H.~M.~A. {Aljassas} and S.~{Sasi}.
\newblock Performance evaluation of proof-of-work and collatz conjecture
  consensus algorithms.
\newblock In {\em Proc. 2nd Int. Conf. Computer Applications Information
  Security (ICCAIS)}, pages 1--6, May 2019.

\bibitem{Goodwin2015}
J.~R. Goodwin.
\newblock The 3x+1 problem and integer representations.
\newblock {\em arXiv: Number Theory}, 2015.

\bibitem{Korec1994}
I.~Korec.
\newblock A density estimate for the 3x + 1 problem.
\newblock {\em Math. Slovaca}, 44:85--89, 1994.

\bibitem{Krasikov2003}
I.~Krasikov and J.~Lagarias.
\newblock Bounds for the 3x + 1 problem using difference inequalities.
\newblock {\em Acta Arithmetica}, 109:237--258, 2003.

\bibitem{Lagarias1992}
J.~C. Lagarias and A.~Weiss.
\newblock The 3x + 1 problem: two stochastic models.
\newblock {\em Annals of Applied Probability}, 2:229--261, 1992.

\bibitem{Ma2019}
H.~{Ma}, C.~{Jia}, S.~{Li}, W.~{Zheng}, and D.~{Wu}.
\newblock Xmark: Dynamic software watermarking using collatz conjecture.
\newblock {\em IEEE Transactions on Information Forensics and Security},
  14(11):2859--2874, November 2019.

\bibitem{Ren2018}
W.~{Ren}, S.~{Li}, R.~{Xiao}, and W.~{Bi}.
\newblock Collatz conjecture for $2^100000-1$ is true - algorithms for
  verifying extremely large numbers.
\newblock In {\em Proc. Internet of People and Smart City Innovation
  (SmartWorld/SCALCOM/UIC/ATC/CBDCom/IOP/SCI) 2018 IEEE SmartWorld, Ubiquitous
  Intelligence Computing, Advanced Trusted Computing, Scalable Computing
  Communications, Cloud Big Data Computing}, pages 411--416, October 2018.

\bibitem{Sayama2011}
H.~{Sayama}.
\newblock An artificial life view of the collatz problem.
\newblock {\em Artificial Life}, 17(2):137--140, April 2011.

\bibitem{Tao2019}
T.~Tao.
\newblock Almost all orbits of the collatz map attain almost bounded values.
\newblock {\em arXiv:1909.03562v2 [math.PR]}, 2019.

\bibitem{Terras1976}
R.~Terras.
\newblock A stopping time problem on the positive integers.
\newblock {\em Acta Arithmetica}, 30:241--252, 1976.

\end{thebibliography}

\end{document}